\documentclass[12pt]{amsart}
\usepackage{amssymb}
\usepackage{graphicx}
\usepackage{enumitem}

\theoremstyle{plain}
\newtheorem{theorem}{Theorem}[section]
\newtheorem{corollary}[theorem]{Corollary}
\newtheorem{lemma}[theorem]{Lemma}
\newtheorem{proposition}[theorem]{Proposition}

\theoremstyle{definition}
\newtheorem{definition}[theorem]{Definition}

\renewcommand{\phi}{\varphi}
\newcommand{\eps}{\varepsilon}

\newcommand{\kln}{{\mathcal K}_{\ell,N}}

\def\cM{\mathcal{M}}
\def\cMw{\widetilde{\mathcal{M}}}

\def\Z{{\mathbb Z}}
\def\cH{\mathcal{H}}


\textwidth=6 true in
\hoffset=-0.5 true in
\textheight=9 true in
\voffset=-0.4 true in

\begin{document}

\title{Topological entropy of generalized Bunimovich stadium billiards}

\author[M.~Misiurewicz]{Micha{\l}~Misiurewicz}

\address[Micha{\l}~Misiurewicz]
{Department of Mathematical Sciences\\ Indiana University-Pur\-due
University Indianapolis\\ 402 N. Blackford Street\\
Indianapolis, IN 46202\\ USA}
\email{mmisiure@math.iupui.edu}

\author[H.-K.~Zhang]{Hong-Kun~Zhang}

\address[Hong-Kun~Zhang]
{Department of Mathematics and Statistics\\ University of
Massachusetts\\ Amherst, MA 01003\\ USA}
\email{hongkun@math.umass.edu}

\subjclass[2010]{Primary 37D50, 37B40}

\keywords{Bunimovich stadium billiard, topological entropy}

\date{December 8, 2022}

\thanks{Research of Micha{\l} Misiurewicz was partially
supported by grant number 426602 from the Simons Foundation.}

\begin{abstract}
We estimate from below the topological entropy of the generalized
Bunimovich stadium billiards. We do it for long billiard tables, and
find the limit of estimates as the length goes to infinity.
\end{abstract}

\maketitle

\section{Introduction}\label{sec-int}

In this paper, we generalize the results of~\cite{MZh} to a much
larger class of billiards. They are similar to Bunimovich stadium
billiards (see~\cite{B1}), but the semicircles are
replaced by almost arbitrary
curves. That is, those curves are not completely arbitrary, but the
assumptions on them is very mild. An example of such curves is shown
in Figure~\ref{fig0}

\begin{figure}[ht]
\begin{center}
\includegraphics[width=140truemm]{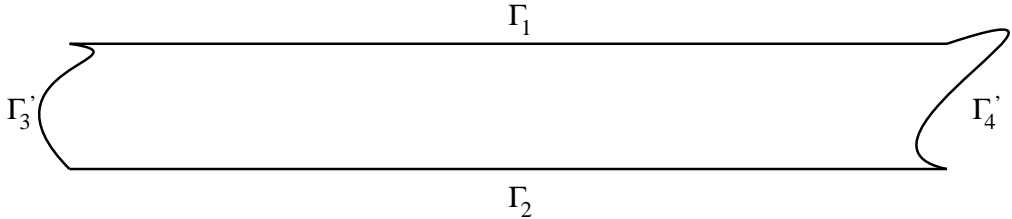}
\caption{Generalized Bunimovich stadium.}\label{fig0}
\end{center}
\end{figure}

We consider billiard maps (not the flows) for two-dimensional billiard
tables. Thus, the phase space of a billiard is the product of the
boundary of the billiard table and the interval $[-\pi/2,\pi/2]$ of
angles of reflection. This phase space will be denoted as $\cM$. We
will use the variables $(r,\phi)$, where $r$ parametrizes the table
boundary by the arc length, and $\phi$ is the angle of reflection.
Those billiards have the natural measure; it is
$c\cos\phi\;dr\;d\phi$, where $c$ is the normalizing constant. This
measure is invariant for the billiard map.

However, we will not be using this measure, but rather investigate our
system as a topological one. The first problem one encounters with
this approach is that the map can be discontinuous, or even not
defined at certain points. In particular, if we want to define
topological entropy of the system, we may use one of several methods,
but we cannot be sure that all of them will give the same result.

To go around this problem, similarly as in~\cite{MZh}, we consider a
compact subset of the phase space, invariant for the billiard map, on
which the map is continuous. Thus, the topological entropy of the
billiard map, no matter how defined, is larger than or equal to the
topological entropy of the map restricted to this subset.

Positive topological entropy is recognized as one of the forms of
chaos. In fact, topological entropy even measures how large this chaos
is. Hence, whenever we prove that the topological entropy is positive,
we can claim that the system is chaotic from the topological point of
view.

We will be using similar methods as in~\cite{MZh}. However, the class
of billiards to which our results can be applied, is much larger. The
class of Bunimovich stadium billiards, up to similarities, depends on
one positive parameter only. Our class is enormously larger, although
we keep the assumption that two parts of the billiard boundary are
parallel segments of straight lines. Nevertheless, some of our proofs
are simpler than those in~\cite{MZh}.

\section{Assumptions}\label{sec-ass}

We will think about the billiard table positioned as in
Figure~\ref{fig0}. Thus, we will use the terms \emph{horizontal,
  vertical, lower, upper, left, right}. While we are working with the
billiard map, we will also look at the billiard flow. Namely, we will
consider \emph{trajectory lines}, that is, line segments between two
consecutive reflections from the table boundary. For such a trajectory
line (we consider it really as a line, not a vector) we define its
\emph{argument} (as an argument of a complex number), which is the
angle between the trajectory line and a horizontal line. For
definiteness, we take the angle from $(-\pi/2,\pi/2]$. We will be also
  speaking about the arguments of lines in the plane. Moreover, for
  $x\in\cM$, we define the argument of $x$ as the argument of of the
  trajectory line joining $x$ with its image.

We will assume that the boundary of billiard table is the union of
four curves, $\Gamma_1$, $\Gamma_2$, $\Gamma_3'$ and $\Gamma_4'$. The
curves $\Gamma_1$ and $\Gamma_2$ are horizontal segments of straight
lines, and $\Gamma_2$ is obtained from $\Gamma_1$ by a vertical
translation. The curve $\Gamma_3'$ joins the left endpoints of
$\Gamma_1$ and $\Gamma_2$, while $\Gamma_4'$ joins the right endpoints
of $\Gamma_1$ and $\Gamma_2$ (see Figure~\ref{fig0}). We will consider
all four curves with endpoints, so they are compact.

\begin{definition} For $\eps\ge 0$, we will call a point $p\in\Gamma_i'$ ($i\in\{3,4\}$)
\emph{$\eps$-free} if any forward trajectory of the flow (here we mean
the full forward trajectory, not just the trajectory line), beginning
at $p$ with a trajectory line with argument whose absolute value is
less than or equal to $\eps$, does not collide with $\Gamma_i'$ before
it collides with $\Gamma_{7-i}'$.
Furthermore, we will call a subarc $\Gamma_i\subset\Gamma_i'$
\emph{$\eps$-free} (see Figure~\ref{fig2}) if:
\begin{enumerate}[label=(\alph*)]
\item $\Gamma_i$ is of class $C^1$;
\item Every point of $\Gamma_i$ is $\eps$-free;
\item There are points $p_{i+},p_{i-}\in\Gamma_i$ such that the
  argument of the line normal to $\Gamma_i$ is larger than or equal to
  $\eps$ at $p_{i+}$ and less than or equal to $-\eps$ at $p_{i-}$
  (see Figure~\ref{fig2});
\item $\Gamma_i$ is disjoint from $\Gamma_1\cup\Gamma_2$.
\end{enumerate}
Clearly, if $\Gamma_i$ is $\eps$-free then it is also $\delta$-free
for all $\delta\in(0,\eps)$.
\end{definition}
Our last assumption is that there is $\eps>0$ and $\eps$-free subarcs
$\Gamma_i\subset\Gamma_i'$ for $i=3,4$, such that
$\Gamma_3\cup\Gamma_4$ is disjoint from $\Gamma_1\cup\Gamma_2$. We
will denote the class of billiard tables satisfying all those
assumptions by $\cH(\eps)$.

\begin{figure}[ht]
\begin{center}
\includegraphics[width=140truemm]{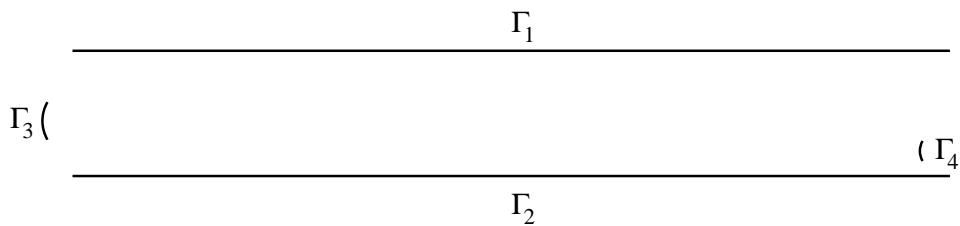}
\caption{Curves $\Gamma_i$, $i=1,2,3,4$.}\label{fig2}
\end{center}
\end{figure}

Observe that there are two simple situations when we know that there
is $\eps>0$ such that $\Gamma_i'$ has an $\eps$-free subarc. One is
when there is a $0$-free point $p_i\in\Gamma_i'$ such that there is a
neighborhood of $p_i$ where $\Gamma_i$ is of class $C^1$ and the
curvature of $\Gamma_i$ at $p_i$ exists and is non-zero (see
Figure~\ref{fig1}). The other one is when $\Gamma_i'$ is the graph of
a non-constant function $x=f(y)$ of class $C^1$ (then we take a
neighborhood of a point where $f$ attains its extremum; this
neighborhood may be large if the extremum is attained on an interval),
like $\Gamma_3'$ (but not $\Gamma_4'$) in Figure~\ref{fig0}.

\begin{figure}[ht]
\begin{center}
\includegraphics[width=140truemm]{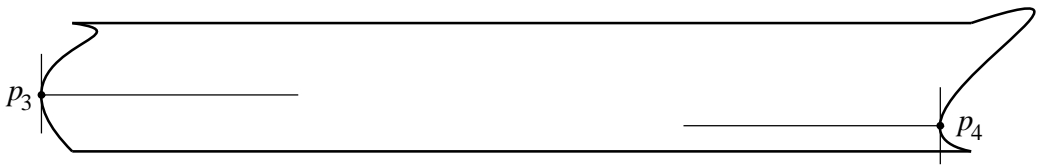}
\caption{Points $p_3$ and $p_4$.}\label{fig1}
\end{center}
\end{figure}

We forget about the other parts of the curves $\Gamma_i'$ and look
only at $\Gamma_i$, $i=1,2,3,4$ (see Figure~\ref{fig2}).

Let us mention that since we will be using only those four pieces of
the boundary of the billiard table, it does not matter whether the
rest of the boundary is smooth or not. If it is not smooth, we can
include it (times $[-\pi/2,\pi/2]$) into the set of singular points,
where the billiard map is not defined.

\section{Coding}\label{sec-cod}

We consider a billiard table from the class $\cH(\eps)$. Since
transforming the table by homothety does not change the entropy, we
may assume that the distance between $\Gamma_1$ and $\Gamma_2$ is 1.
Now we can introduce a new characteristic of our billiard table. We
will say that a billiard table from the class $\cH(\eps)$ is in the
class $\cH(\eps,\ell)$ if the horizontal distance between $\Gamma_3$
and $\Gamma_4$ is at least $\ell$. We can think about $\ell$ as a big
number (it will go to infinity).

We start with a trivial geometrical fact, that follows immediately
from the rule of reflection. We include the assumption
  that the absolute values of the arguments are smaller than $\pi/6$
  in order to be sure that the absolute value of the argument of $T_2$
  is smaller than $\pi/2$.

\begin{lemma}\label{l-trivial}
If $T_1$ and $T_2$ are incoming and outgoing parts of a trajectory
reflecting at $q$ and the argument of the line normal to the boundary
of the billiard at $q$ is $\alpha$, and $|\alpha|,|\arg(T_1)|<\pi/6$,
then $\arg(T_2)=2\alpha-\arg(T_1)$.
\end{lemma}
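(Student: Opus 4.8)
The plan is to reduce the statement to the elementary law of reflection, which says that the incoming and outgoing trajectory lines make equal angles with the normal line at the reflection point $q$, and are on opposite sides of it. First I would set up coordinates: let $\nu$ be the line normal to the boundary at $q$, so $\arg(\nu)=\alpha$, and let the tangent line have argument $\alpha\pm\pi/2$. Measuring signed angles from $\nu$, write the incoming line $T_1$ as making angle $\theta$ with $\nu$, so that $\arg(T_1)=\alpha-\theta$ (choosing the sign convention so that the reflected line lies at angle $-\theta$ on the other side). The law of reflection then gives $\arg(T_2)=\alpha+\theta=\alpha+(\alpha-\arg(T_1))=2\alpha-\arg(T_1)$, which is exactly the claimed formula.

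The one genuine point to check — and the reason the hypothesis $|\alpha|,|\arg(T_1)|<\pi/6$ is imposed — is that all the arguments involved genuinely lie in the range $(-\pi/2,\pi/2]$ where the ``argument of a line'' is defined, so that no reduction mod $\pi$ silently intervenes and the additive identity $\arg(T_2)=2\alpha-\arg(T_1)$ holds on the nose rather than only modulo $\pi$. From $|\alpha|<\pi/6$ and $|\arg(T_1)|<\pi/6$ we get $|2\alpha-\arg(T_1)|<\pi/2$, so the right-hand side is a legitimate argument, and by the reflection law it must coincide with $\arg(T_2)$. I would spell this out: the reflection of the direction of $T_1$ across the normal direction $\alpha$ is the direction $2\alpha-\arg(T_1)$ (mod $2\pi$ as directions, hence mod $\pi$ as lines); the bound just established pins down the representative, giving equality of arguments as real numbers.

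I do not expect any serious obstacle here; the content is purely the reflection law plus bookkeeping of angle conventions. The only mildly delicate step is being careful that ``incoming'' versus ``outgoing'' and the orientation of the normal are handled consistently so that the formula comes out with the correct signs, but since lines (not vectors) are used throughout, the sign ambiguities collapse and the computation above is robust. I would therefore present the proof as: (i) recall the law of reflection in terms of the normal line at $q$; (ii) express $\arg(T_1)$ and $\arg(T_2)$ as $\alpha\mp\theta$ for the common angle $\theta$ between each trajectory line and the normal; (iii) use $|\alpha|,|\arg(T_1)|<\pi/6$ to confirm $|2\alpha-\arg(T_1)|<\pi/2$, so the identity holds as stated. $\qquad\blacksquare$
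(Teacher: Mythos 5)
Your proposal is correct and matches the paper's intent exactly: the paper treats this as an immediate consequence of the law of reflection and includes the hypothesis $|\alpha|,|\arg(T_1)|<\pi/6$ precisely to guarantee $|2\alpha-\arg(T_1)|<\pi/2$, so that the identity holds as an equality of arguments in $(-\pi/2,\pi/2]$ rather than only modulo $\pi$. Your write-up simply makes explicit the bookkeeping the paper leaves to the reader.
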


We consider only trajectories that reflect from the curves $\Gamma_i$,
$i=1,2,3,4$. In order to have control over this subsystem, we fix an
integer $N>1$ and denote by $\kln$ the space of points whose
(discrete) trajectories go only through $\Gamma_i$, $i=1,2,3,4$ and
have no $N+1$ consecutive collisions with the straight segments.

We can unfold the billiard table by using reflections from the
straight segments (see Figure~\ref{fig3}). The liftings of
trajectories (of the flow) consist of segments between points of
liftings of $\Gamma_3$ and $\Gamma_4$. In $\kln$ they go at most $N$
levels up or down.

\begin{figure}[ht]
\begin{center}
\includegraphics[width=140truemm]{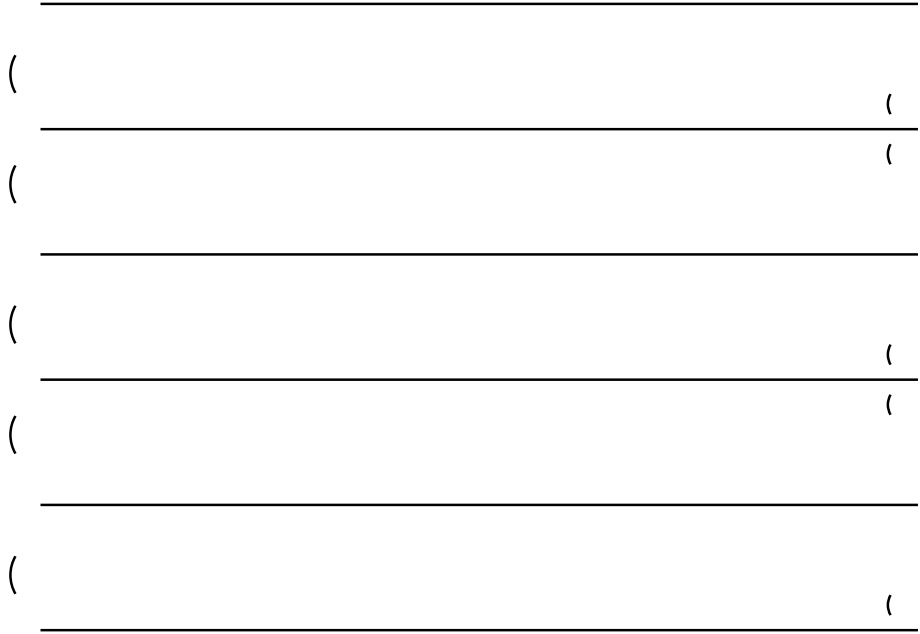}
\caption{Five levels of the unfolding. Only $\Gamma_3$ and $\Gamma_4$
  are shown instead of $\Gamma_3'$ and $\Gamma_4'$.}\label{fig3}
\end{center}
\end{figure}

Now for a moment we start working on the lifted billiard. That is, we
consider only $\Gamma_3$ and $\Gamma_4$, but at all levels, as pieces
of the boundary from which the trajectories of the flow can reflect.
We denote those pieces by $\Gamma_{i,k}$, where $i\in\{3,4\}$ and
$k\in\Z$. Clearly, flow trajectories from some points $(r,\phi)$ will
not have more collisions, so the lifted billiard map $F$ will be not
defined at such points. We denote by $\cMw$ the product of the union
of all sets $\Gamma_{i,k}$ and the interval $[\pi/2,\pi/2]$.

Now we specify how large $\ell$ should be for given $N,\eps$ in order
to get nice properties of the billiard map restricted to $\kln$.

Assume that our billiard table belongs to $\cH(\eps,\ell)$ and fix
$i\in\{3,4\}$, $k\in\Z$. Call a continuous map $\gamma:[a,b]\to\cMw$,
given by
\[
\gamma(t)=(\gamma_r(t),\gamma_\phi(t)),
\]
an \emph{$(i,k,\eps)$-curve} if $\gamma_r([a,b])=\Gamma_{i,k}$ and for
every $t\in[a,b]$ the absolute value of the argument of
the trajectory line \emph{incoming to} $\gamma(t)$ is
at most $\eps$. We can think of $\gamma$ as a bundle of trajectories
of a flow incoming to $\Gamma_{i,k}$. In order to be able to use
Lemma~\ref{l-trivial}, we will always assume that $\eps<\pi/6$.

\begin{lemma}\label{angles}
Assume that the billiard table belongs to $\cH(\eps,\ell)$ and fix
$N\ge 0$, $i\in\{3,4\}$, $k\in\Z$, and $j\in\{-N,-N+1,\dots,N-1,N\}$.
Assume that
\begin{equation}\label{e-1}
\ell\ge\frac{N+1}{\tan\eps}
\end{equation}
Then every $(i,k,\eps)$-curve $\gamma$ has a subcurve whose image
under $F$ (that is, $F\circ\gamma|_{[a',b']}$ for some subinterval
$[a',b']\subset[a,b]$) is a $(7-i,k+j,\eps)$-curve.
\end{lemma}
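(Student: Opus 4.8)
The goal is to show: starting from a bundle of incoming trajectories to $\Gamma_{i,k}$ with arguments bounded by $\eps$, after one reflection and free flight we can find a sub-bundle that hits $\Gamma_{7-i,k+j}$ again with all incoming arguments bounded by $\eps$. The plan is to track what one reflection does to arguments via Lemma~\ref{l-trivial}, then use an intermediate-value / connectedness argument to produce the required sub-bundle, and finally use the geometric inequality~\eqref{e-1} together with $\eps$-freeness to make sure the outgoing trajectories actually reach the target copy $\Gamma_{7-i,k+j}$ of the opposite curve without colliding with anything else first.

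**Main steps in order.** First I would unwind the reflection. Take an $(i,k,\eps)$-curve $\gamma\colon[a,b]\to\cMw$ with $\gamma_r([a,b])=\Gamma_{i,k}$. At each $t$, the incoming line $T_1(t)$ has $|\arg T_1(t)|\le\eps<\pi/6$, and the normal to $\Gamma_{i,k}$ at $\gamma(t)$ has some argument $\alpha(t)$; since $\Gamma_i$ is $\eps$-free, property (c) of the definition gives points $p_{i+},p_{i-}$ where $\alpha\ge\eps$ and $\alpha\le-\eps$ respectively, and since $\Gamma_i$ is $C^1$, $\alpha$ is continuous. By Lemma~\ref{l-trivial} the outgoing argument is $\arg T_2(t)=2\alpha(t)-\arg T_1(t)$. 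Second, I would run the intermediate value theorem: the function $t\mapsto \arg T_2(t)$ is continuous, and near $p_{i+}$ it is $\ge 2\eps-\eps=\eps$ while near $p_{i-}$ it is $\le -2\eps+\eps=-\eps$; hence for every target value $\theta$ with $|\theta|\le\eps$ there is a point $t$ with $\arg T_2(t)=\theta$, and in fact (by continuity) a whole subinterval $[a',b']$ on which $\arg T_2$ ranges over an interval of values, all of absolute value $\le\eps$, and on which $\gamma_r$ still sweeps out all of $\Gamma_{i,k}$ — this last point needs a little care, and I'd arrange $[a',b']$ to be a minimal subinterval with $\gamma_r([a',b'])=\Gamma_{i,k}$, or argue that the relevant monotonic pieces can be chosen to cover $\Gamma_{i,k}$. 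Third, I would pin down the geometry: an outgoing trajectory line leaving $\Gamma_{i,k}$ with $|\arg T_2|\le\eps$ travels at most $\ell/\tan\eps \le \ell/\tan(\arg T_2)$... more precisely, the vertical displacement over a horizontal run of length $\ge\ell$ is at most $\ell\tan\eps$; but I want the bound the other way — I want that to change level by more than $N$ it would need horizontal run exceeding $(N+1)/\tan\eps$, wait, let me restate: to go up or down by an integer amount in the unfolding the trajectory must cross a horizontal segment, and reaching level $k+j$ requires vertical travel roughly $|j|\le N$; the constraint~\eqref{e-1}, $\ell\ge (N+1)/\tan\eps$, guarantees that before the horizontal coordinate advances by $\ell$ (the minimum horizontal gap between $\Gamma_3$ and $\Gamma_4$) the trajectory can have changed level by any prescribed $j$ with $|j|\le N$ while the angle stays $\le\eps$ — and then it meets the copy $\Gamma_{7-i,k+j}$. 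The $\eps$-freeness of $\Gamma_{i}$ (property (b)) is exactly what ensures the outgoing trajectory with argument $\le\eps$ does not hit $\Gamma_{i}'$ again before reaching $\Gamma_{7-i}'$, so on the lifted picture it does not collide with any $\Gamma_{i,\cdot}$ before $\Gamma_{7-i,k+j}$.

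**Finishing.** Combining: restrict $\gamma$ to the subinterval $[a',b']$ found above; along it the outgoing argument stays within $[-\eps,\eps]$ and, choosing which of the $2N+1$ level shifts we realize by selecting the sub-bundle whose angles cluster near the value making the lifted trajectory land on level $k+j$, the map $F\circ\gamma|_{[a',b']}$ has $(F\circ\gamma)_r$ equal to $\Gamma_{7-i,k+j}$ and incoming argument (which is the outgoing argument $\arg T_2$ of the previous segment) bounded by $\eps$. That is precisely a $(7-i,k+j,\eps)$-curve.

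**Expected main obstacle.** The delicate point is the simultaneous control in step two and three: I must choose the subinterval $[a',b']$ so that (i) $\gamma_r$ still covers all of $\Gamma_{i,k}$, (ii) the outgoing arguments all lie in $[-\eps,\eps]$, and (iii) those arguments are concentrated enough (near a single value, or at least confined to a range consistent with the target strip geometry) that every trajectory in the bundle lands on the same copy $\Gamma_{7-i,k+j}$ rather than a neighbor. Balancing requirement (i) against (iii) is where~\eqref{e-1} does real work: the quantitative slack $(N+1)/\tan\eps$ versus $\ell$ is what lets a full angular sub-range still map into one level. I would handle this by first using~\eqref{e-1} to show that the whole angular interval $[-\eps,\eps]$ of outgoing directions, applied at any point of $\Gamma_{i,k}$, reaches $\Gamma_{7-i}$ having shifted by a level that depends continuously (hence, being integer-valued, locally constantly) on the data, then invoke the intermediate value theorem once more to select the subinterval on which this integer shift equals $j$ while $\gamma_r$ still surjects onto $\Gamma_{i,k}$.
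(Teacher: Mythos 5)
Your skeleton is the same as the paper's: use Lemma~\ref{l-trivial} to compute the outgoing argument $2\alpha(t)-\arg T_1(t)$, use the points $p_{i\pm}$ from property (c) plus continuity of the normal to find a parameter subinterval on which the outgoing argument sweeps $[-\eps,\eps]$, and use~\eqref{e-1} to see that this angular spread, over a horizontal run of at least $\ell$, produces a vertical spread of at least $N+1$ levels on the far side. However, there is a genuine error in what you are trying to select. Your condition (i) --- that the subinterval $[a',b']$ still satisfy $\gamma_r([a',b'])=\Gamma_{i,k}$ --- is not part of the conclusion and is in general impossible to achieve together with your condition (ii). The lemma asks that $F\circ\gamma|_{[a',b']}$ be a $(7-i,k+j,\eps)$-curve, i.e.\ that the \emph{image's} $r$-coordinate cover the target arc, $(F\circ\gamma)_r([a',b'])=\Gamma_{7-i,k+j}$, with the arguments incoming to the image in $[-\eps,\eps]$; nothing whatsoever is required of $\gamma_r([a',b'])$. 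And if $\gamma_r([a',b'])$ did cover all of $\Gamma_{i,k}$, it would contain parameters mapping to liftings of $p_{i+}$ and $p_{i-}$, where by Lemma~\ref{l-trivial} the outgoing argument is $\ge\eps$, respectively $\le-\eps$, and in general strictly so --- violating (ii). You single out the tension between (i) and (iii) as ``where~\eqref{e-1} does real work,'' which is a misdiagnosis: \eqref{e-1}'s actual job is to make the two extreme rays of the bundle (arguments $\pm\eps$) rise and fall by at least $\ell\tan\eps\ge N+1$, so that the bundle's first-hit points sweep across \emph{all} of $\Gamma_{7-i,k+j}$ for every $|j|\le N$.

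Relatedly, your condition (iii) has the geometry backwards: you do not need the outgoing angles ``concentrated enough'' near a single value so that all trajectories land on the same copy; you need the bundle to spread \emph{widely} enough that its hit points run past the whole arc $\Gamma_{7-i,k+j}$, and then you cut out, by one more intermediate-value argument, the subinterval $[a',b']$ of parameters whose hit points fill exactly that arc. That final step is present in your last paragraph, so the proof is repairable by simply deleting the requirement that $\gamma_r$ surject onto $\Gamma_{i,k}$ and replacing it with surjectivity of $(F\circ\gamma)_r$ onto $\Gamma_{7-i,k+j}$; as written, though, the argument is aimed at a condition that would make it fail.
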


\begin{proof}
There are points $c_-,c_+\in[a,b]$ such that $\gamma_r(c_-)$ is a
lifting of $p_{i-}$ and $\gamma_r(c_+)$ is a lifting of $p_{i+}$.
Then, by Lemma~\ref{l-trivial}, the lifted trajectory line outgoing
from $\gamma(c_-)$ (respectively, $\gamma(c_+)$) has argument smaller
than $-\eps$ (respectively, larger than $\eps$).
Since the direction of the line normal to $\Gamma_{i,k}$
  at the point $\gamma_r(t)$ varies continuously with $t$, the
  argument of the lifted trajectory line outgoing from $\gamma(t)$
  also varies continuously with $t$. Therefore, there is a subinterval
  $[a'',b'']\subset [a,b]$ such that at one of the points $a'',b''$
  this argument is $-\eps$, at the other one is $\eps$, and in between
  is in $[-\eps,\eps]$. When the bundle of lifted trajectory lines
  starting at $\gamma([a'',b''])$ reaches liftings of $\Gamma_{7-i}$,
  it collides with all points of $\Gamma_{7-i,k+j}$ whenever
  $j+1\le\ell\tan\eps$. By~\eqref{e-1}, this includes all $j$ with
  ${j}\le N$. Therefore, there is a subinterval
  $[a',b']\subset[a'',b'']$ such that
  $(F\circ\gamma)_r([a',b'])=\Gamma_{7-i,k+j}$. The arguments of the
  lifted trajectory lines incoming to $(F\circ\gamma)([a',b'])$ are in
  $[-\eps,\eps]$, so we get a $(7-i,k+j,\eps)$-curve.
\end{proof}

Using this lemma inductively we get immediately the next lemma.

\begin{lemma}\label{l-iti}
Assume that the billiard table belongs to $\cH(\eps,\ell)$ and fix
$N\ge 0$ such that~\eqref{e-1} is satisfied. Then for every finite
sequence
\[
(k_{-j},\dots,k_{-1},k_0,k_1,\dots,k_j)
\]
of integers with absolute values at most $N$ there is a trajectory
piece in the lifted billiard going between lifting of  $\Gamma_3$ and
$\Gamma_4$ with the differences of levels
$k_{-j},\dots,k_{-1},k_0,k_1,\dots,k_j$.
\end{lemma}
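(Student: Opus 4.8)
The plan is to obtain Lemma~\ref{l-iti} as a straightforward induction on the length of the sequence, feeding repeated applications of Lemma~\ref{angles} into each other. First I would fix a starting level; say, without loss of generality, the trajectory piece begins on a lifting of $\Gamma_3$, so the collisions alternate $\Gamma_3, \Gamma_4, \Gamma_3, \dots$ (the case starting on $\Gamma_4$ is symmetric, and which side one starts on is dictated by the parity of the indices anyway). I would start from \emph{any} $(3,0,\eps)$-curve $\gamma^{(0)}$; such a curve exists trivially, since one can take a bundle of vertical (argument $0$) trajectory lines incoming to all of $\Gamma_{3,0}$.

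The inductive step is exactly Lemma~\ref{angles}: given an $(i,k,\eps)$-curve, for the prescribed next level-difference $k_m$ (which has absolute value at most $N$, so it is a legal value of the index $j$ in Lemma~\ref{angles}), there is a subcurve whose image under $F$ is a $(7-i,k+k_m,\eps)$-curve. Applying this successively for $m = -j, -j+1, \dots, j$ produces a nested sequence of subintervals and a sequence of curves $\gamma^{(0)} \supset \gamma^{(1)} \supset \cdots$, where each $\gamma^{(m+1)} = F\circ\gamma^{(m)}|_{[a_{m+1},b_{m+1}]}$ is an $(i_{m+1},k_{-j}+\cdots+k_{-j+m},\eps)$-curve, the side index $i_m$ alternating between $3$ and $4$. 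Since Lemma~\ref{angles} hypothesizes only membership in $\cH(\eps,\ell)$ together with~\eqref{e-1} and these hold throughout, nothing degrades along the induction. Picking any point $t$ in the final (nonempty) parameter interval and tracing back through the $F$-preimages gives one trajectory of the lifted flow passing through liftings of $\Gamma_3,\Gamma_4,\Gamma_3,\dots$ whose consecutive level-differences are precisely $k_{-j},\dots,k_j$.

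There is essentially no obstacle here beyond bookkeeping: one must be careful that the nested subintervals $[a_{m+1},b_{m+1}]$ are always nonempty (which is guaranteed because Lemma~\ref{angles} asserts the existence of the subcurve, hence of a nondegenerate parameter subinterval realizing the full arc $\Gamma_{7-i,k+j}$), and that ``going between liftings of $\Gamma_3$ and $\Gamma_4$ with level-differences $k_{-j},\dots,k_j$'' is correctly matched to the sequence of side-indices $3,4,3,\dots$; the statement of Lemma~\ref{l-iti} abstracts away the side labels precisely because they are forced. I would also remark that the same reasoning works for bi-infinite or one-sided infinite sequences by a compactness/nested-intersection argument, but the finite statement needs only the finite induction just described.
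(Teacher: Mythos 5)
Your proposal is correct and is exactly the paper's argument: the paper proves Lemma~\ref{l-iti} by a one-line appeal to iterating Lemma~\ref{angles}, and you have simply written out that induction in detail (starting from a trivially existing initial $(i,k,\eps)$-curve and nesting parameter subintervals). The only nit is terminological: argument $0$ means the incoming trajectory lines are \emph{horizontal}, not vertical, but since you only use the condition $|\arg|\le\eps$ this does not affect the proof.
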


Note that in the above lemma we are talking about trajectory pieces of
length $2j+1$, without requiring that those pieces can be extended
backward or forward to a full trajectory.

\begin{proposition}\label{p-iti}
Under the assumption of Lemma~\ref{l-iti}, for every two-sided
sequence
\[
(\dots,k_{-2},k_{-1},k_0,k_1,k_2,\dots)
\]
of integers with absolute values at most $N$ there is a trajectory in
the lifted billiard going between liftings of $\Gamma_3$ and
$\Gamma_4$ with the differences of levels
$\dots,k_{-2},k_{-1},k_0,k_1,k_2,\dots$.
\end{proposition}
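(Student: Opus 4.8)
The plan is to pass from the finite trajectory pieces guaranteed by Lemma~\ref{l-iti} to a bi-infinite one by a standard compactness/diagonal argument. For each $n\ge 1$, apply Lemma~\ref{l-iti} to the finite sequence $(k_{-n},\dots,k_n)$ to obtain a trajectory piece of the lifted billiard realizing exactly those differences of levels; record, say, the point $x_n\in\cMw$ at which its ``central'' collision takes place (the one playing the role of $k_0$), together with the outgoing trajectory line there. All these points lie in the set of collisions with $\Gamma_3\cup\Gamma_4$ at levels between $-N$ and $N$, which is a compact subset of $\cMw$ (finitely many compact arcs $\Gamma_{i,k}$ times the compact angle interval). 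Hence the sequence $(x_n)$ has a convergent subsequence $x_{n_m}\to x_\infty$.

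Next I would argue that the full trajectory through $x_\infty$ has the required itinerary. Fix any $j$. For all $m$ with $n_m\ge j$, the trajectory piece attached to $x_{n_m}$ has, among its collisions, the $2j+1$ consecutive ones around the center realizing $(k_{-j},\dots,k_j)$. The positions of these collisions and the directions of the trajectory lines between them are continuous functions of $x_{n_m}$ on the relevant compact region — here one uses that, once a bundle of trajectory lines has argument bounded by $\eps<\pi/6$, Lemma~\ref{angles} keeps it collision-free between consecutive liftings of $\Gamma_3$ and $\Gamma_4$, so the billiard map $F$ is defined and continuous along these finitely many steps, and the level differences, being integers, are locally constant. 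Passing to the limit $m\to\infty$, the trajectory through $x_\infty$ makes $2j+1$ collisions around $x_\infty$ with level differences $(k_{-j},\dots,k_j)$. Since $j$ was arbitrary, and the finite itineraries are nested, the bi-infinite trajectory through $x_\infty$ realizes $(\dots,k_{-1},k_0,k_1,\dots)$, which is what we wanted.

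The main point to handle carefully is the continuity and definedness of $F$ along the $2j+1$ central steps uniformly in $m$: a priori the trajectory pieces from Lemma~\ref{l-iti} are only asserted to exist, not to vary nicely, and the lifted billiard map $F$ is undefined at many points of $\cMw$. The way around this is to carry along not just the points but the $(i,k,\eps)$-curve structure: each finite piece from Lemma~\ref{l-iti} is built, via Lemma~\ref{angles}, as a nested sequence of subcurves of $(i,k,\eps)$-curves, on which all arguments stay in $[-\eps,\eps]$, so $F$ is genuinely defined and continuous there and the level differences are locally constant. Taking limits within this constrained region is then legitimate, and no trajectory escapes or hits a singularity during the finitely many steps under consideration. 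This is essentially the same compactness argument used to produce orbits with prescribed itineraries in symbolic dynamics, and I expect it to be routine once the ``stay within $\eps$-curves'' bookkeeping is in place.
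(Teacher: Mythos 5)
Your argument is correct and is essentially the paper's own proof: the paper runs the same compactness argument as a nested intersection of the (nonempty, compact) sets of points whose trajectories from time $-j$ to $j$ realize the central block $(k_{-j},\dots,k_j)$, while you phrase it via a convergent subsequence of central collision points. The continuity and local-constancy facts you take care to spell out (that $F$ is defined and continuous along the finitely many steps because all arguments stay in $[-\eps,\eps]$) are precisely what makes the paper's nested sets closed, so the two write-ups are interchangeable.
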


\begin{proof}
For every finite sequence $(k_{-j},\dots,k_{-1},k_0,k_1,\dots,k_j)$
the set of points of $\Gamma_3\times[-\pi/2,\pi/2]$ or
$\Gamma_4\times[-\pi/2,\pi/2]$ whose trajectories from time $-j$ to
$j$ exist  and satisfy Lemma~\ref{l-iti} is compact and nonempty. As
$j$ goes to infinity, we get a nested sequence of compact sets. Its
intersection is the set of points whose trajectories behave in the way
we demand, and it is nonempty.
\end{proof}

Consider the following subshift of finite type
$(\Sigma_{\ell,N},\sigma)$. The states are
\[
-N,-N+1,\dots,-1,0,1,\dots,N-1,N,
\]
and the transitions are: from 0 to 0, 1 and $-1$, from $i$ to $i+1$
and 0 if $1\le i\le N-1$, from $N$ to 0, from $-i$ to $-i-1$ and 0 if
$1\le i\le N-1$, and from $-N$ to 0. Each trajectory of a point from
$\kln$ can be coded by assigning the symbol 0 to
$\Gamma_3\cup\Gamma_4$ and for the parts between two zeros either
$1,2,\dots,j$ if the the first point is in $\Gamma_1$, or
$-1,-2,\dots,-j$ if the first point is in $\Gamma_2$. This defines a
map from $\kln$ to $\Sigma_{\ell,N}$. This map is continuous, because
the preimage of every cylinder is open (this follows immediately from
the fact that the straight pieces of our trajectories of the billiard
flow intersect the arcs $\Gamma_i$, $i=1,2,3,4$, only at the endpoints
of those pieces, and that the arcs are disjoint). It is a surjection
by Proposition~\ref{p-iti}. Therefore it is a semiconjugacy, and
therefore, the topological entropy of the billiard map restricted to
$\kln$ is larger than or equal to the topological entropy of
$(\Sigma_{\ell,N},\sigma)$.

\section{Computation of topological entropy}\label{sec-cote}

In the preceding section we obtained a subshift of finite type. Now
we have to compute its topological entropy. If the alphabet
of a subshift of finite type is $\{1,2,\dots,n\}$, then we
can write the \emph{adjacency matrix} $M=(m_{ij})_{i,j=1}^n$, where
$m_{ij}=1$ if there is a transition from $i$ to $j$ and $m_{ij}=0$
otherwise. Then the topological entropy of our subshift is the
logarithm of the spectral radius of $M$ (see~\cite{K, ALM}).

In the case of large, but not too complicated, matrices, in order to
compute the spectral radius one can use the \emph{rome method}
(see~\cite{BGMY, ALM}). For the adjacency matrices of
$(\Sigma_{\ell,N},\sigma)$ this method is especially simple. Namely,
if we look at the paths given by transitions, we see that 0 is a rome:
all paths lead to it. Then we only have to identify the lengths of all
paths from 0 to 0 that do not go through 0 except at the beginning and
the end. The spectral radius of the adjacency matrix is then the
largest zero of the function $\sum x^{-p_i}-1$, where the sum is over
all such paths and $p_i$ is the length of the $i$-th path.

\begin{lemma}\label{l-ent}
Topological entropy of the system $(\Sigma_{\ell,N},\sigma)$ is the
logarithm of the largest root of the equation
\begin{equation}\label{eq0}
x^2-2x-1=-2x^{-N}.
\end{equation}
\end{lemma}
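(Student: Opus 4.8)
The plan is to use the rome method exactly as set up in the paragraph preceding the statement, with $0$ as the rome. First I would enumerate all simple paths from $0$ to $0$ (those that do not pass through $0$ except at the endpoints) in the subshift $(\Sigma_{\ell,N},\sigma)$. From the transition rules, from $0$ one can go to $0$, to $1$, or to $-1$. The path $0\to 0$ has length $1$. On the positive side, from $1$ the only non-$0$ continuation is $1\to 2$, then $2\to 3$, and so on up to $N$, and at each stage $i$ (for $1\le i\le N$) there is an edge $i\to 0$ closing the path. So the positive simple paths are $0\to 1\to 0$ of length $2$, $0\to 1\to 2\to 0$ of length $3$, \dots, $0\to 1\to 2\to\cdots\to N\to 0$ of length $N+1$. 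By symmetry the negative side contributes another path of each length from $2$ to $N+1$. Hence the multiset of path lengths is: one path of length $1$, and two paths of each length $2,3,\dots,N+1$.

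Next I would write down the rome equation $\sum_i x^{-p_i}=1$, which becomes
\[
x^{-1}+2\sum_{p=2}^{N+1}x^{-p}=1.
\]
Then it is a routine matter to sum the finite geometric series and clear denominators. Multiplying through by $x^{N+1}$ gives $x^{N}+2(x^{N-1}+x^{N-2}+\cdots+1)=x^{N+1}$, i.e. $x^{N+1}-x^{N}-2\cdot\frac{x^{N}-1}{x-1}=0$ for $x\neq 1$; multiplying by $(x-1)$ and simplifying yields $x^{N+1}(x-2)-x^{N}(x-1)+2 = 0$ after collecting terms, which rearranges to $x^{N}(x^2-2x-1)=-2$, that is, the claimed equation \eqref{eq0}. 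I would then note that the spectral radius of the adjacency matrix is the \emph{largest} real zero of the rome function $\sum x^{-p_i}-1$, so the topological entropy is the logarithm of the largest root of \eqref{eq0}, and it remains only to check this largest root is a positive real number (indeed it lies in $(1,1+\sqrt 2]$, tending to $1+\sqrt2$ as $N\to\infty$, since the left side of \eqref{eq0} at $x=1+\sqrt2$ equals $0>-2(1+\sqrt2)^{-N}$ and the left side is increasing for $x>1+\sqrt2$).

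The only real subtlety — and the step I would be most careful about — is the bookkeeping of the simple paths: one must make sure no simple $0$-to-$0$ path is missed or double-counted, in particular that the self-loop $0\to 0$ is counted exactly once (it has no symmetric counterpart), that there is no "shortcut" edge such as $2\to 1$ or $N\to 1$ that would create extra simple paths, and that the chain on each side really terminates at level $N$ with the edge $N\to 0$ rather than continuing. Given the explicit transition list this is straightforward, but it is where an error would propagate into a wrong polynomial. Everything after that is algebra together with the standard fact (cited from \cite{BGMY, ALM}) that the rome method computes the spectral radius.
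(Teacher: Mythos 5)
Your proof is correct and follows essentially the same route as the paper: the same enumeration of simple $0$-to-$0$ paths (one of length $1$ and two of each length $2,\dots,N+1$), the same rome equation $x^{-1}+2\sum_{p=2}^{N+1}x^{-p}=1$, and the same algebraic reduction to \eqref{eq0}. The only blemish is the intermediate display $x^{N+1}(x-2)-x^{N}(x-1)+2=0$, which expands to $x^{N+2}-3x^{N+1}+x^{N}+2$ rather than the correct $x^{N+2}-2x^{N+1}-x^{N}+2$ (i.e.\ it should read $x^{N+1}(x-2)-x^{N}+2=0$); your final equation $x^{N}(x^2-2x-1)=-2$ is nonetheless right.
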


\begin{proof}
The paths that we mentioned before the lemma, are: one path of length
1 (from 0 directly to itself), and two paths of length $2,3,\dots,N+1$
each. Therefore, our entropy is the logarithm of the largest zero of
the function $2(x^{-(N+1)}+\dots+x^{-3}+x^{-2})+x^{-1}-1$. We have
\[
x(1-x)\big(2(x^{-(N+1)}+\dots +x^{-3}+x^{-2})+x^{-1}-1\big)=
(x^2-2x-1)+2x^{-N},
\]
so our entropy is the logarithm of the largest root of
equation~\eqref{eq0}.
\end{proof}

\begin{corollary}\label{c-ent}
Assume that the billiard table belongs to $\cH(\eps,\ell)$ and fix
$N\ge 0$ such that~\eqref{e-1} is satisfied. Then the topological
entropy of the billiard map restricted to $\kln$ is larger than or
equal to the logarithm of the largest root of equation~\eqref{eq0}.
\end{corollary}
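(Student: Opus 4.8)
The plan is simply to assemble the pieces already established in Sections~\ref{sec-cod} and~\ref{sec-cote}. First I would recall the coding map built at the end of Section~\ref{sec-cod}: assigning the symbol $0$ to each collision with $\Gamma_3\cup\Gamma_4$, and the symbols $1,2,\dots,j$ (respectively $-1,-2,\dots,-j$) to a run of $j$ consecutive collisions with the straight segments starting from $\Gamma_1$ (respectively $\Gamma_2$), produces a map $\pi\colon\kln\to\Sigma_{\ell,N}$ with $\pi\circ(\text{billiard map})=\sigma\circ\pi$. As noted there, $\pi$ is continuous, since the preimage of every cylinder is open (the straight trajectory segments meet the pairwise disjoint arcs $\Gamma_i$ only at their endpoints). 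The point where hypothesis~\eqref{e-1} is used is surjectivity: by Lemmas~\ref{angles} and~\ref{l-iti} and Proposition~\ref{p-iti}, every two-sided admissible sequence is realized, so $\pi$ is onto and hence a semiconjugacy.

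Next I would invoke the standard fact that topological entropy does not increase under a factor map: if $(X,f)$ factors onto $(Y,g)$ via a continuous surjection, then $h(f)\ge h(g)$. Applied to $\pi$ this gives
\[
h\big(\text{billiard map}|_{\kln}\big)\ \ge\ h(\Sigma_{\ell,N},\sigma).
\]
Finally, Lemma~\ref{l-ent} identifies $h(\Sigma_{\ell,N},\sigma)$ as the logarithm of the largest root of~\eqref{eq0}, and combining this with the displayed inequality yields the corollary.

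Since the argument is a concatenation of results already in hand, I do not expect a serious obstacle. The only points needing care are: (i) that the coding map is genuinely surjective — which is exactly the content of condition~\eqref{e-1} feeding into Proposition~\ref{p-iti}; and (ii) that the factor-map entropy inequality is legitimately applicable in this purely topological setting — it is, because $\kln$ is compact and the billiard map restricted to it is continuous, which was the whole reason for passing to $\kln$ in the first place.
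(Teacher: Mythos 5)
Your argument is correct and is exactly the route the paper takes: the corollary is an immediate consequence of the semiconjugacy onto $(\Sigma_{\ell,N},\sigma)$ established at the end of Section~\ref{sec-cod} (whose surjectivity rests on~\eqref{e-1} via Proposition~\ref{p-iti}) combined with the entropy computation in Lemma~\ref{l-ent}. The paper states it without a separate proof for precisely this reason, so your assembly of the pieces is the intended one.
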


A particular case of this corollary gives us a sufficient condition
for positive topological entropy. Namely, notice that the largest root
of the equation $x^2-2x-1=-2x^{-1}$ is $2$.

\begin{corollary}\label{c-ent1}
Assume that the billiard table belongs to $\cH(\eps,\ell)$ and
$\ell\tan\eps\ge 2$. Then the topological entropy of the billiard map
is at least $\log2$, so the map is chaotic in topological sense.
\end{corollary}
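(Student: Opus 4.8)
The plan is to specialize Corollary~\ref{c-ent} to the smallest value of $N$ that yields a nontrivial bound, namely $N=1$. First I would check that the hypothesis matches condition~\eqref{e-1} for this $N$: since the table lies in $\cH(\eps)$ we have $0<\eps<\pi/6$, so $\tan\eps>0$, and therefore $\ell\tan\eps\ge 2$ is equivalent to $\ell\ge 2/\tan\eps=(N+1)/\tan\eps$ with $N=1$. Hence a table in $\cH(\eps,\ell)$ with $\ell\tan\eps\ge2$ satisfies the assumptions of Corollary~\ref{c-ent} with $N=1$.

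Next I would evaluate equation~\eqref{eq0} at $N=1$. It becomes $x^2-2x-1=-2x^{-1}$; multiplying by $x$ (the root we want is positive, so no information is lost) gives $x^3-2x^2-x+2=0$, which factors as $(x-2)(x-1)(x+1)=0$, so the largest root is $2$, exactly as noted in the text before the statement. Corollary~\ref{c-ent} then tells us that the topological entropy of the billiard map restricted to $\mathcal K_{\ell,1}$ is at least $\log 2$.

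Finally I would invoke the general principle recorded in the introduction: $\mathcal K_{\ell,1}$ is a compact subset of the phase space, invariant under the billiard map, on which that map is continuous, so the topological entropy of the billiard map — however one chooses to define it in the presence of discontinuities — is bounded below by the topological entropy of its restriction to $\mathcal K_{\ell,1}$, hence by $\log 2$. Since $\log 2>0$, positive topological entropy certifies chaos in the topological sense, giving the last assertion.

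There is essentially no obstacle here: the corollary is a pure numerical specialization. The only two points that require a word of care are (i) that $N=1$ is indeed the right choice, since $N=0$ produces the one-state subshift of entropy $0$ (consistently, the largest root of~\eqref{eq0} for $N=0$ is $1$), so one genuinely needs $N\ge1$; and (ii) that the hypothesis $\ell\tan\eps\ge2$ must be rewritten in the form of~\eqref{e-1} rather than treated as an independent assumption.
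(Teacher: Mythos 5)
Your proposal is correct and matches the paper's (implicit) argument exactly: the paper proves this corollary simply by taking $N=1$ in Corollary~\ref{c-ent}, observing that \eqref{e-1} then reads $\ell\tan\eps\ge 2$ and that the largest root of $x^2-2x-1=-2x^{-1}$ is $2$. Your factorization $(x-2)(x-1)(x+1)$ and the remarks about why $N=1$ is the right choice are accurate but just make explicit what the paper leaves to the reader.
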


{
It is interesting how this estimate works for the classical Bunimovich
stadium billiard. In fact, for the estimate we will improve a little comparing to 
the above Corollary.

\begin{figure}[ht]
\begin{center}
\includegraphics[width=50truemm]{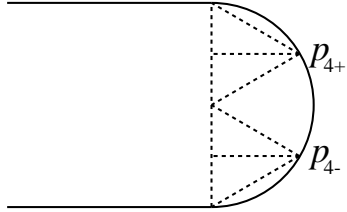}
\caption{Computations for the stadium billiard.}\label{fig10}
\end{center}
\end{figure}

\begin{proposition}\label{st}
If the rectangular part of a stadium has the length/width ratio larger
than $\sqrt3\approx 1.732$ (see Figure~\ref{fig4}), the billiard map
has topological entropy at least $\log 2$.
\end{proposition}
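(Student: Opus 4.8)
The plan is to place the stadium as in Section~\ref{sec-ass}: the two horizontal sides $\Gamma_1,\Gamma_2$ at distance $1$, so that the rectangular part is $[0,L]\times[0,1]$ with $L>\sqrt3$, and $\Gamma_3',\Gamma_4'$ the left and right semicircular caps, of radius $\tfrac12$, centered at $(0,\tfrac12)$ and $(L,\tfrac12)$. The function $\eps\mapsto(L+\cos\eps)\tan\eps-(1+\sin\eps)$ is continuous and equals $L/\sqrt3-1>0$ at $\eps=\pi/6$, so I may fix $\eps<\pi/6$ so close to $\pi/6$ that $(L+\cos\eps)\tan\eps>1+\sin\eps$. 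I would then let $\Gamma_3$ be the closed subarc of $\Gamma_3'$ on which the normal line has argument in $[-\eps,\eps]$ (it is symmetric about the midline $y=\tfrac12$ and contains the leftmost point of $\Gamma_3'$), and $\Gamma_4\subset\Gamma_4'$ analogously.

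The first task is to check that this stadium lies in $\cH(\eps,\ell)$ with $\ell=L+\cos\eps$, which is exactly the horizontal distance between $\Gamma_3$ and $\Gamma_4$. Conditions (a), (c), (d) in the definition of an $\eps$-free subarc are immediate; for (b) one uses the familiar observation that a trajectory line leaving a point of $\Gamma_3$ into the table with argument of absolute value at most $\eps<\pi/6$ has positive horizontal component, while reflections off $\Gamma_1$ and $\Gamma_2$ leave the horizontal component unchanged. Hence the horizontal coordinate of the flow trajectory increases monotonically until it next meets $\Gamma_3'\cup\Gamma_4'$, and since every point of $\Gamma_3$ lies strictly to the left of all of $\Gamma_4'$, that first meeting is with $\Gamma_4'$. (A trajectory line of argument at most $\eps$ issued from a point of $\Gamma_3'$ can return immediately to $\Gamma_3'$ only from the points within angular distance $2\eps$ of an endpoint of the semicircle, and our $\Gamma_3$ avoids these precisely because $\eps<\pi/6$; likewise for $\Gamma_4$.)

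The core step is to show that for this stadium the conclusion of Lemma~\ref{angles} holds with $N=1$, even though inequality~\eqref{e-1} fails. I would follow the proof of Lemma~\ref{angles} verbatim up to the point where one has the subinterval $[a'',b'']$ and the bundle of lifted trajectory lines issuing from $\gamma([a'',b''])$ with arguments filling $[-\eps,\eps]$, the extreme values $\pm\eps$ being attained at the endpoints. These lines emanate from near the middle of the cap, between the liftings of the two points of $\Gamma_i$ where the normal has argument $\pm\eps$; for the left cap the emanation point of the $+\eps$-line lies on the subarc from $\theta=\pi$ to $\theta=\pi+\eps$ (in the evident angular parameter; recall the reflection rule, Lemma~\ref{l-trivial}), so in the worst case at the lifting of $(-\tfrac12\cos\eps,\tfrac12-\tfrac12\sin\eps)$, from which the $+\eps$-line is the ray of slope $\tan\eps$. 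In the unfolded table the lifting, one level higher, of the upper endpoint of $\Gamma_{7-i}$ is a point lying at horizontal distance $L+\cos\eps$ from, and $1+\sin\eps$ above, that emanation point; an elementary trigonometric computation in the unfolded table (cf.\ Figure~\ref{fig10}) then shows that the $+\eps$-line passes above that point precisely because $(L+\cos\eps)\tan\eps>1+\sin\eps$, while symmetrically the $-\eps$-line ends well below the lower endpoint of that same target. The reason the additive ``$1$'' of~\eqref{e-1} is replaced here by $\sin\eps<\tfrac12$ is that $\Gamma_3$ and $\Gamma_4$ are centered on the midline, so their liftings $\Gamma_{i,k}$ sit symmetrically between consecutive integer levels; moreover the full semicircular liftings $\Gamma_{i,k}'$ tile the vertical strip with no gaps, and the boundary of the stadium is $C^1$, so the point at which the trajectory from $\gamma(t)$ meets $\bigcup_k\Gamma_{7-i,k}'$ depends continuously on $t$. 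That point therefore sweeps continuously from below $\Gamma_{7-i,k+1}$ to above it as $t$ traverses $[a'',b'']$, so a subcurve of $\gamma|_{[a'',b'']}$ maps under $F$ onto a $(7-i,k+1,\eps)$-curve; the same works for the target levels $k$ and $k-1$.

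Granting this, Lemma~\ref{l-iti}, Proposition~\ref{p-iti}, and the semiconjugacy of Section~\ref{sec-cod} apply verbatim with $N=1$, so the billiard map restricted to $\mathcal{K}_{\ell,1}$ semiconjugates onto $(\Sigma_{\ell,1},\sigma)$, whose topological entropy is $\log 2$ by Lemma~\ref{l-ent} (the relevant root is $2$, as noted just before Corollary~\ref{c-ent1}); hence the billiard map has topological entropy at least $\log 2$. The step I expect to be the main obstacle is exactly this refined version of Lemma~\ref{angles}: one must track a whole bundle of trajectory lines across the \emph{curved} part of the boundary --- where, unlike for the straight-segment bounces, the geometry is genuinely two-dimensional --- taking account of the fact that the extreme lines of the bundle do not issue from the extreme points of $\Gamma_i$, and verifying that the landing point varies continuously and sweeps across the whole target subarc despite the gaps $\Gamma_{i,k}'\setminus\Gamma_{i,k}$; it is the $C^1$-smoothness of the stadium boundary that keeps the landing point continuous through those gaps.
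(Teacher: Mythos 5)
Your proof is correct and follows essentially the same route as the paper's: the paper also works with $N=1$ and $\eps$ close to $\pi/6$, replacing the constant $N+1=2$ in~\eqref{e-1} by the vertical extent of $\Gamma_{i,0}\cup\Gamma_{i,1}$, namely $1+\sin\eps\to\tfrac32$, which is precisely your inequality $(L+\cos\eps)\tan\eps>1+\sin\eps$ and yields the same bound $\ell'>\sqrt3$; your write-up just supplies the worst-case geometry that the paper leaves implicit. The one place you over-claim slightly is the continuity of the landing point on $\bigcup_k\Gamma'_{7-i,k}$ --- grazing incidences near the integer-height junctions of the unfolded boundary do produce jumps --- but since for $\eps<\pi/6$ such a jump skips only points lying within vertical distance $\tfrac12-\tfrac12\cos 2\eps<\tfrac12-\tfrac12\sin\eps$ of an integer level, hence outside every target arc $\Gamma_{7-i,k+j}$, your intermediate-value argument is unaffected.
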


\begin{proof}
We can take $\eps$ as close to $\pi/6$ as we want (see
Figure~\ref{fig10}), so we get the assumption in the corollary
$\ell>2\sqrt3$. However, the factor 2 (in general, $N+1$
in~\eqref{e-1}) was taken to get an estimate that works for all
possible choices of $\Gamma_i$, $i=3,4$. For our concrete choice it is
possible to replace it by the vertical size of
$\Gamma_{i,0}\cup\Gamma_{i,1}$ (or $\Gamma_{i,0}\cup\Gamma_{i,-1}$,
bit it is the same in our case). This number is not 2, but $\frac32$.
Thus, we really get $\ell>\frac32\sqrt3$. If $\ell'$ is the length of
the rectangular part of the stadium, then
$\ell=\ell'+2\cdot\frac{\sqrt3}4=\ell'+\frac12\sqrt3$.
This gives us $\ell'>\sqrt3$.
\end{proof}
}

\begin{figure}[ht]
\begin{center}
\includegraphics[width=140truemm]{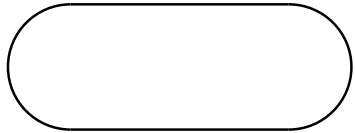}
\caption{Stadium billiard with topological entropy at least
  $\log2$.}\label{fig4}
\end{center}
\end{figure}

Now we can prove the main result of this paper.

\begin{theorem}\label{main}
For the billiard tables from the class $\cH$
with the shapes of $\Gamma_3$ and $\Gamma_4$ fixed, the lower limit of
the topological entropy of the generalized Bunimovich stadium
billiard, as its length $\ell$ goes to infinity, is at least
$\log(1+\sqrt2)$.
\end{theorem}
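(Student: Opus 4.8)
The plan is to apply Corollary~\ref{c-ent} with a value of $N$ that grows with $\ell$, and then to analyze the asymptotics of the largest root of equation~\eqref{eq0} as $N\to\infty$. The first point is that once the shapes of $\Gamma_3$ and $\Gamma_4$ are fixed, so is a number $\eps>0$ (which we may take to be less than $\pi/6$) for which $\Gamma_3$ and $\Gamma_4$ are $\eps$-free; hence, for all sufficiently large $\ell$, every table in the family whose horizontal distance between $\Gamma_3$ and $\Gamma_4$ equals $\ell$ belongs to $\cH(\eps,\ell)$ with this \emph{same} $\eps$.

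Given this, for each such $\ell$ I would choose $N=N(\ell):=\lfloor\ell\tan\eps\rfloor-1$, the largest integer satisfying~\eqref{e-1}, and note that $N(\ell)\to\infty$ as $\ell\to\infty$. By Corollary~\ref{c-ent} (together with the fact, discussed in the introduction, that restricting to the invariant set $\kln$ cannot increase the entropy), the topological entropy of the billiard map of any table of length $\ell$ in the family is at least $\log x_{N(\ell)}$, where $x_N$ denotes the largest root of~\eqref{eq0}. It therefore suffices to prove that $x_N\to 1+\sqrt2$ as $N\to\infty$; this is the expected value, since $1+\sqrt2$ is the largest root of the equation $x^2-2x-1=0$ obtained by formally letting $N\to\infty$ in~\eqref{eq0}.

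To justify this I would set $h_N(x)=x^2-2x-1+2x^{-N}$ and argue in two steps. Since $(1+\sqrt2)^2-2(1+\sqrt2)-1=0$, we have $h_N(1+\sqrt2)=2(1+\sqrt2)^{-N}>0$; and because $h_N'(x)=2x-2-2Nx^{-N-1}$ with $2x-2\ge 2\sqrt2$ and $2Nx^{-N-1}\le N2^{-N}\to 0$ on the interval $[1+\sqrt2,\infty)$ (here we use $1+\sqrt2>2$), the function $h_N$ is increasing on $[1+\sqrt2,\infty)$ for all large $N$, so it has no zero there; hence $x_N<1+\sqrt2$. Conversely, fix $\delta\in(0,\sqrt2)$. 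Then
\[
h_N(1+\sqrt2-\delta)=-\delta(2\sqrt2-\delta)+2(1+\sqrt2-\delta)^{-N}\longrightarrow -\delta(2\sqrt2-\delta)<0
\]
as $N\to\infty$, since $1+\sqrt2-\delta>1$. Thus $h_N(1+\sqrt2-\delta)<0<h_N(1+\sqrt2)$ for all large $N$, so by the intermediate value theorem $x_N\in(1+\sqrt2-\delta,\,1+\sqrt2)$. As $\delta>0$ was arbitrary, $x_N\to 1+\sqrt2$, and consequently the lower limit of the topological entropy as $\ell\to\infty$ is at least $\liminf_{\ell\to\infty}\log x_{N(\ell)}=\log(1+\sqrt2)$.

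I do not anticipate a real obstacle: the substantive work is already contained in Lemmas~\ref{angles}--\ref{l-ent} and Corollary~\ref{c-ent}, and what remains is the elementary asymptotics above. The one thing worth double-checking is uniformity — the bound must hold for every table of a given length and must not rely on a clever choice of $\eps$ — but since the limiting root $\log(1+\sqrt2)$ is independent of $\eps$, and $\eps$ affects only the rate at which $N(\ell)$ grows, any fixed admissible $\eps>0$ determined by the fixed shapes of $\Gamma_3$ and $\Gamma_4$ will do.
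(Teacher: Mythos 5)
Your proposal is correct and follows essentially the same route as the paper: invoke Corollary~\ref{c-ent} with $N$ chosen as large as~\eqref{e-1} permits for the given $\ell$, and then show that the largest root of~\eqref{eq0} tends to $1+\sqrt2$, the largest root of $x^2-2x-1=0$. The paper dispatches the root-convergence step in one sentence (the right-hand side of~\eqref{eq0} tends uniformly to $0$ near $1+\sqrt2$), whereas you spell it out with an explicit monotonicity and intermediate-value argument, but the content is the same.
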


\begin{proof}
In view of Corollary~\ref{c-ent} and the fact that the largest root
of the equation $x^2-2x-1=0$ is $1+\sqrt2$, we only have to prove that
the largest root of the equation~\eqref{eq0} converges to the largest
root of the equation $x^2-2x-1=0$ as $N\to\infty$. However, this
follows from the fact that in the neighborhood of $1+\sqrt2$ the
right-hand side of~\eqref{eq0} goes uniformly to 0 as $N\to\infty$.
\end{proof}

\section{Generalized semistadium billiards}

In a similar way we can investigate generalized semistadium billiards.
They are like generalized stadium billiards, but one of the caps
$\Gamma_3',\Gamma_4'$ is a vertical straight line segment. The other
one contains an $\eps$-free subarc. This class contains, in
particular, Bunimovich's Mushroom billiards (see~\cite{B2}), see
Figure~\ref{fig6}. We will be talking about the classes $\cH_{1/2}$,
$\cH_{1/2}(\eps)$ and $\cH_{1/2}(\eps,\ell)$ of billiard tables.

\begin{figure}[ht]
\begin{center}
\includegraphics[width=140truemm]{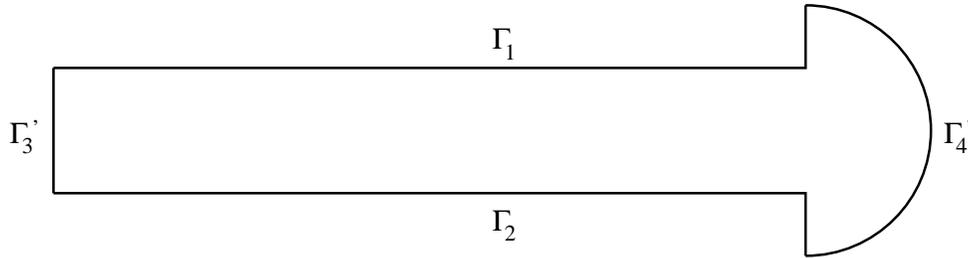}
\caption{A mushroom.}\label{fig6}
\end{center}
\end{figure}

When we construct a lifting, we add the reflection from the flat
vertical cap. In such a way we obtain the same picture as in
Section~\ref{sec-cod}, except that there is an additional vertical
line through the middle of the picture, and we have to count the flow
trajectory crossing it as an additional reflection (see
Figure~\ref{fig7}). Note that since we will be working with the lifted
billiard, in the computations we can take $2\ell$ instead of $\ell$.
In particular, inequality~\eqref{e-1} will be now replaced by
\begin{equation}\label{e-2}
\ell\ge\frac{N+1}{2\tan\eps}
\end{equation}

\begin{figure}[ht]
\begin{center}
\includegraphics[width=140truemm]{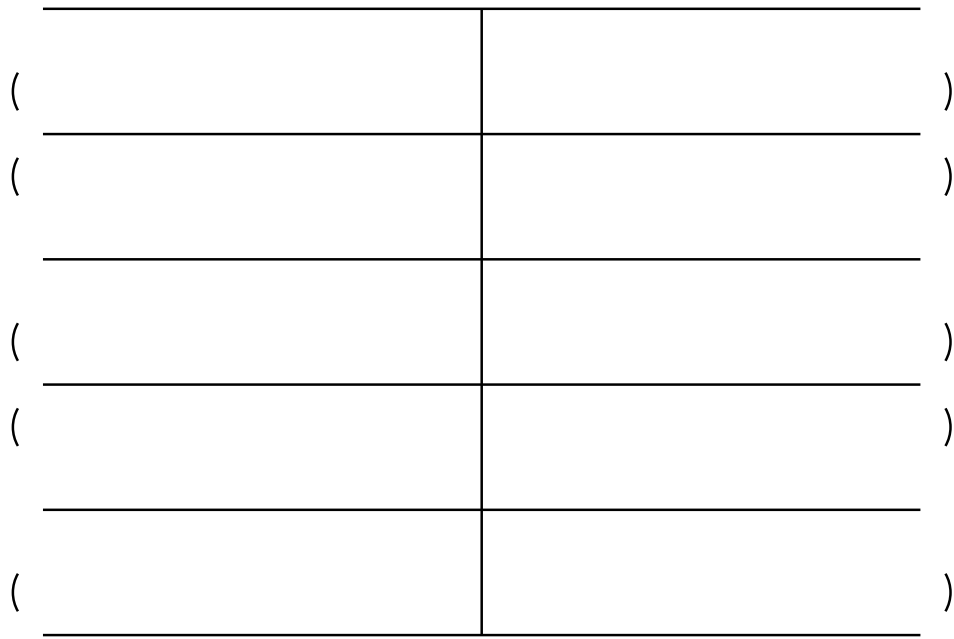}
\caption{Unfolding.}\label{fig7}
\end{center}
\end{figure}

Computation of the topological entropy is this time a little more
complicated. We cannot claim that after coding we are obtaining a
subshift of finite type. This is due to the fact that if $\Gamma_i'$
is a vertical segment, we would have to take $\Gamma_i=\Gamma_i'$, and
$\Gamma_i$ would not be disjoint from $\Gamma_1$ and $\Gamma_2$. The
second reason is that the moment when the reflection from the vertical
segment occurs depends on the argument of the trajectory line.

The formula for the topological entropy of the subshift of finite type
comes from counting of number of cylinders of length $n$ and then
taking the exponential growth rate of this number as $n$ goes to
infinity. Here we can try do exactly the same, but the problem occurs
with the growth rate, since we have additional reflections from the
vertical segment. This means that the cylinders of length $n$ from
Section~\ref{sec-cod} correspond not to time $n$, but to some
larger time. How much larger, depends on the cylinder. However, there
cannot be two consecutive reflections from the vertical segment, so
this time is not larger than $2n$, and by extending the trajectory we
may assume that it is equal to $2n$ (maybe there will be more
cylinders, but we need only a lower estimate). Thus, if the number of
cylinders (which we count in Section~\ref{sec-cod}) of length $n$ is
$a_n$, instead of taking the limit of $\frac1n\log a_n$ we take the
limit of $\frac1{2n}\log a_n$, that is, the half of the limit from
Section~\ref{sec-cod}. In such a way we get the following results.

\begin{proposition}\label{p-ent}
Assume that the billiard table belongs to $\cH_{1/2}(\eps,\ell)$ and
fix $N\ge 0$ such that~\eqref{e-2} is satisfied. Then the topological
entropy of the billiard map restricted to $\kln$ is larger than or
equal to one half of the logarithm of the largest root of
equation~\eqref{eq0}.
\end{proposition}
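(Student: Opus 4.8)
The plan is to rerun Sections~\ref{sec-cod} and~\ref{sec-cote} with the flat vertical cap folded out, so that in the computations one works with $2\ell$ in place of $\ell$. Say $\Gamma_3'$ is the vertical segment and $\Gamma_4\subset\Gamma_4'$ is the $\eps$-free subarc. Unfolding in $\Gamma_1,\Gamma_2$ produces, as before, the levels $k\in\Z$; unfolding in addition in the vertical line carrying $\Gamma_3'$ gives the picture of Figure~\ref{fig7}: liftings of $\Gamma_4$ at all levels, placed alternately on the two sides of vertical lines spaced a horizontal distance $2\ell$ apart, and between two consecutive liftings of $\Gamma_4$ a trajectory line travels horizontal distance at least $2\ell$ and crosses exactly one such vertical line (it cannot cross two, since a billiard trajectory cannot reflect twice in a row off a straight line). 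One defines $(4,k,\eps)$-curves exactly as in Section~\ref{sec-cod}. The proof of Lemma~\ref{angles} then carries over verbatim, the only change being that the bundle of outgoing trajectory lines now has horizontal distance at least $2\ell$ at its disposal before reaching the next lifting of $\Gamma_4$, so it covers $\Gamma_{4,k+j}$ as soon as $j+1\le 2\ell\tan\eps$, which by~\eqref{e-2} holds for all $|j|\le N$. This is precisely why~\eqref{e-1} is replaced by~\eqref{e-2}, and it yields the analogues of Lemma~\ref{l-iti} and Proposition~\ref{p-iti} with the same subshift of finite type $(\Sigma_{\ell,N},\sigma)$.

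Next I would code a point of $\kln$ by recording the whole sequence of its reflections: the symbol $0$ for each reflection in $\Gamma_4$, the symbols $1,2,\dots$ (respectively $-1,-2,\dots$) for a run of consecutive reflections in $\Gamma_1$ (respectively $\Gamma_2$) between two visits to $\Gamma_4$, exactly as in Section~\ref{sec-cod}, and one extra symbol, say $*$, for each reflection in $\Gamma_3'$. Since there is exactly one reflection in $\Gamma_3'$ between any two consecutive symbols $0$, this defines a factor map from the billiard map restricted to $\kln$ onto a subshift $\ws$ over the alphabet $\{-N,\dots,N,*\}$; $\ws$ need not be of finite type, but we shall not need that. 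Continuity holds for the same reason as in Section~\ref{sec-cod}: the arcs $\Gamma_1,\Gamma_2,\Gamma_4$ are pairwise disjoint and trajectory lines meet them only at the endpoints of the corresponding segments, while $\Gamma_3'$ meets $\Gamma_1\cup\Gamma_2$ only at corners, which are excluded from $\kln$. Deleting the symbols $*$ from a point of $\ws$ gives a point of $\Sigma_{\ell,N}$, and by the analogue of Proposition~\ref{p-iti} every admissible word arises this way; in particular, for every admissible word $w$ of $\Sigma_{\ell,N}$ of length $n$ there is a bi-infinite trajectory in $\kln$ whose $*$-free coding begins with $w$.

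It remains to turn this into the entropy estimate. In its first $n$ reflections off $\Gamma_1\cup\Gamma_2\cup\Gamma_4$, a trajectory realizing a length-$n$ word $w$ has at most $n$ interspersed reflections off $\Gamma_3'$, hence at most $2n$ reflections altogether; extending it forward inside $\kln$ to a segment of exactly $2n$ reflections and reading the $\ws$-coding of that segment gives a word of $\ws$ of length $2n$ from which $w$ is recovered by deleting the $*$'s and keeping the first $n$ symbols. This assignment is injective, so the number $W_{2n}$ of length-$2n$ words of $\ws$ is at least the number $a_n$ of length-$n$ words of $\Sigma_{\ell,N}$. By Lemma~\ref{l-ent}, $\frac1n\log a_n\to\log\lambda$, where $\lambda$ is the largest root of~\eqref{eq0}, so $\frac1{2n}\log W_{2n}\ge\frac1{2n}\log a_n\to\frac12\log\lambda$; since $\lim_m\frac1m\log W_m$ exists and equals the topological entropy of $(\ws,\sigma)$, we obtain $h_{top}(\ws,\sigma)\ge\frac12\log\lambda$, and hence the billiard map restricted to $\kln$ has topological entropy at least $\frac12\log\lambda$. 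The step I expect to require the most care is the bookkeeping in this last paragraph — verifying that exactly one $\Gamma_3'$-reflection sits between consecutive $\Gamma_4$-visits, so that time is at most doubled; that the extension to $2n$ reflections stays inside $\kln$; and that distinct $w$ produce distinct length-$2n$ words of $\ws$ — together with the adaptation of the continuity argument of Section~\ref{sec-cod} to the fact that $\Gamma_3'$ shares its endpoints with $\Gamma_1$ and $\Gamma_2$ and that the moment of the $\Gamma_3'$-reflection varies with the argument of the trajectory line.
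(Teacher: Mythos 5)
Your proposal follows essentially the same route as the paper: unfold across the vertical cap so that~\eqref{e-1} is replaced by~\eqref{e-2}, obtain the same subshift $(\Sigma_{\ell,N},\sigma)$, and use the fact that no two consecutive reflections occur on the vertical segment to conclude that a cylinder of symbolic length $n$ corresponds to at most $2n$ actual reflections, which halves the entropy estimate. Your write-up is in fact more detailed and careful than the paper's own argument (which is given only as the informal discussion preceding the proposition), but the underlying idea is identical.
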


\begin{proposition}\label{p-ent1}
Assume that the billiard table belongs to $\cH_{1/2}(\eps,\ell)$ and
$\ell\tan\eps\ge 1$. Then the topological entropy of the billiard map
is at least $\frac12\log2$, so the map is chaotic in topological
sense.
\end{proposition}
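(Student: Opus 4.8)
The plan is to obtain this statement as the semistadium counterpart of Corollary~\ref{c-ent1}, by specializing Proposition~\ref{p-ent} to the smallest nontrivial value of $N$. Recall that Corollary~\ref{c-ent1} was deduced from Corollary~\ref{c-ent} by observing that the largest root of $x^2-2x-1=-2x^{-1}$ — that is, of equation~\eqref{eq0} with $N=1$ — equals $2$. The same observation drives the argument here; the only differences are that in the semistadium setting the relevant hypothesis on $\ell$ is~\eqref{e-2} rather than~\eqref{e-1}, and that the entropy estimate of Proposition~\ref{p-ent} carries the extra factor $\frac12$.

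First I would fix $N=1$. With this choice, inequality~\eqref{e-2} reads $\ell\ge\frac{2}{2\tan\eps}=\frac{1}{\tan\eps}$, which is exactly the hypothesis $\ell\tan\eps\ge 1$. Thus the assumptions of Proposition~\ref{p-ent} are satisfied with $N=1$, and I would apply it: the topological entropy of the billiard map restricted to $\kln$ is at least one half of the logarithm of the largest root of equation~\eqref{eq0}. For $N=1$ this equation is $x^2-2x-1=-2x^{-1}$, whose largest root is $2$, as already recorded just before Corollary~\ref{c-ent1}. Hence the entropy of the restriction is at least $\frac12\log 2$.

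Finally, as explained in the introduction, the topological entropy of the full billiard map — no matter how one chooses to define it in the presence of singularities — dominates the entropy of its restriction to the invariant compact subset $\kln$ on which the map is continuous. Therefore the topological entropy of the billiard map is at least $\frac12\log 2>0$, so the map is chaotic in the topological sense. I do not expect any genuine obstacle here: the statement is a direct numerical specialization of Proposition~\ref{p-ent}, and the only point to verify is the elementary arithmetic that $N=1$ makes~\eqref{e-2} coincide with the stated hypothesis while producing the root $2$.
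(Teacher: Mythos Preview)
Your proof is correct and is exactly the argument the paper intends: Proposition~\ref{p-ent1} is stated without separate proof as the direct analogue of Corollary~\ref{c-ent1}, obtained by specializing Proposition~\ref{p-ent} to $N=1$, noting that~\eqref{e-2} then becomes $\ell\tan\eps\ge 1$ and that the largest root of~\eqref{eq0} with $N=1$ is $2$.
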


\begin{theorem}\label{mainn}
For the billiard tables from the class $\cH_{1/2}$ with the shape of
$\Gamma_3$ or $\Gamma_4$ (the one that is not the vertical segment)
fixed, the lower limit of the topological entropy of the generalized
Bunimovich stadium billiard, as its length $\ell$ goes to infinity, is
at least $\frac12\log(1+\sqrt2)$.
\end{theorem}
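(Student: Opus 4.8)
The plan is to mirror the proof of Theorem~\ref{main}, using the semistadium analogue (Proposition~\ref{p-ent}) in place of Corollary~\ref{c-ent}. The only structural difference is the factor $\tfrac12$ coming from the extra reflections off the vertical cap, but this factor is uniform in $N$, so it does not interfere with the limiting argument.

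First I would invoke Proposition~\ref{p-ent}: for every $N$ and every $\ell$ satisfying~\eqref{e-2}, the topological entropy of the billiard map is at least $\tfrac12\log\rho_N$, where $\rho_N$ is the largest root of equation~\eqref{eq0}, namely $x^2-2x-1=-2x^{-N}$. Next I would observe that, with the shape of the nonflat cap fixed, we have a fixed $\eps>0$, and condition~\eqref{e-2} is satisfied for all sufficiently large $\ell$ once $N$ is fixed; hence for each $N$ the bound $\tfrac12\log\rho_N$ is a valid lower bound for the entropy of all long enough tables in $\cH_{1/2}$. Consequently the lower limit of the entropy as $\ell\to\infty$ is at least $\tfrac12\log\rho_N$ for every $N$, and therefore at least $\tfrac12\limsup_{N\to\infty}\log\rho_N$.

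The remaining point is exactly the one settled in the proof of Theorem~\ref{main}: $\rho_N\to 1+\sqrt2$ as $N\to\infty$, since $1+\sqrt2$ is the largest root of $x^2-2x-1=0$ and the perturbation term $-2x^{-N}$ on the right-hand side of~\eqref{eq0} tends uniformly to $0$ on a neighborhood of $1+\sqrt2$. Taking logarithms and multiplying by $\tfrac12$ yields the claimed bound $\tfrac12\log(1+\sqrt2)$.

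**The main obstacle**, such as it is, is not analytic but bookkeeping: one must be sure that the quantifiers line up correctly — that $N$ can be chosen freely and then $\ell$ taken large depending on $N$ and the fixed $\eps$ — so that the $\limsup$ over $N$ legitimately bounds the $\liminf$ over $\ell$ from below. Since all the hard work (the coding, the compactness argument of Proposition~\ref{p-iti}, the rome-method computation, and the root convergence) is already in place, the proof is essentially a one-line reduction to Theorem~\ref{main} with an extra factor of $\tfrac12$.
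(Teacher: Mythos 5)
Your proposal is correct and follows exactly the route the paper intends: Theorem~\ref{mainn} is the direct analogue of Theorem~\ref{main}, obtained by replacing Corollary~\ref{c-ent} with Proposition~\ref{p-ent} (which carries the extra factor $\tfrac12$) and reusing the convergence of the largest root of~\eqref{eq0} to $1+\sqrt2$. Your care with the quantifier order (fix $N$, then take $\ell$ large enough for~\eqref{e-2}) is precisely the bookkeeping the paper leaves implicit.
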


{
We can apply Proposition~\ref{p-ent1} to the Bunimovich mushroom
billiard in order to get entropy at least $\frac12\log2$. As for the
stadium, we need to make some computations, and again, we will make a
slight improvement in the estimates. The interior of the
mushroom billiard consist of a rectangle (the stalk) and a half-disk
(the cap). According to our notation, the stalk is of vertical size 1;
denote its horizontal size by $\ell'$. Moreover, denote the radius of
the cap by $t$.

\begin{proposition}\label{mu}
If $\ell'>\frac12\sqrt{16t^2-1}$ then the topological entropy of the
mushroom billiard is at least $\frac12\log2$.
\end{proposition}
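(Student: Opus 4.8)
The plan is to apply Proposition~\ref{p-ent1} with the best possible choice of $\eps$, exactly in the spirit of the argument for the stadium (Proposition~\ref{st}), adapted to the mushroom via the factor $2\ell$ that appears in inequality~\eqref{e-2}. First I would recall that for the mushroom the cap $\Gamma_i'$ is a half-disk of radius $t$, so I may take $\eps$ as large as the argument of the normal at the endpoints of the semicircular arc allows; geometrically, looking at Figure~\ref{fig10}-type computations, the normal directions sweep through a full range, so $\eps$ can be taken arbitrarily close to the value forced only by the requirement that the $\eps$-free arc stay disjoint from $\Gamma_1\cup\Gamma_2$, i.e.\ from the top and bottom edges of the stalk. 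Since the stalk has vertical size $1$ and the cap has radius $t$, the subarc of the semicircle lying strictly between the two horizontal lines is the part at vertical distance less than $1/2$ from the center, which by Pythagoras corresponds to a horizontal extent and hence a half-angle $\eps$ with $\cos\eps$ related to $1/(2t)$; more precisely the extreme normal we may use has argument $\eps$ with $\sin\eps = \frac{1}{2t}$ replaced by the correct relation coming from $\sqrt{t^2-(1/2)^2}=\frac12\sqrt{4t^2-1}$ being the horizontal half-width, so $\tan\eps = \sqrt{4t^2-1}$... wait, I should be careful here and set it up so the final inequality comes out as stated.

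Concretely, the key step is to compute $\tan\eps$ for the optimal $\eps$-free subarc. Place the center of the semicircular cap on the axis of the stalk. A point on the semicircle at height $y$ (measured from the center) has outward normal whose argument with the horizontal is $\arcsin(y/t)$ in absolute value when we set things up correctly, but since we want the normal to be nearly vertical we instead measure the complement; the correct statement is that the $\eps$-free arc can use all points of the semicircle with $|y|<1/2$, and at $y=\pm 1/2$ the normal makes an angle with the \emph{vertical} equal to $\arcsin\!\big(\tfrac{1/2}{t}\big)$, hence with the horizontal an angle $\eps$ satisfying $\cos\eps = \tfrac{1}{2t}$, so that $\tan\eps = \sqrt{(2t)^2-1}\,/\,1 = \sqrt{4t^2-1}$. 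I would double-check this against the degenerate stadium case $t=1/2$, where it should reduce to $\eps\to 0$: indeed $\sqrt{4t^2-1}=0$ there, consistent with the cap being flush with the stalk. Then apply Proposition~\ref{p-ent1}: the hypothesis $\ell\tan\eps\ge 1$ becomes $\ell\ge \tfrac{1}{\sqrt{4t^2-1}}$, but as in Proposition~\ref{st} I would improve the factor $N+1=2$ in~\eqref{e-2} to the vertical size of $\Gamma_{i,0}\cup\Gamma_{i,1}$, which here is $\tfrac32$ (one full level of height $1$ plus the half-disk contribution, matching the stadium computation since the cap geometry above the stalk is the same), giving the refined requirement.

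Finally I would do the bookkeeping converting between $\ell$ and $\ell'$. The quantity $\ell$ in our normalization is the horizontal distance between $\Gamma_3$ and $\Gamma_4$, which for the mushroom is the stalk length $\ell'$ plus the horizontal reach of the $\eps$-free subarc on the cap side; that reach is $\sqrt{t^2-(1/2)^2}=\tfrac12\sqrt{4t^2-1}$. Combining the refined inequality $2\ell\tan\eps > \tfrac32$ from the $\tfrac32$-improvement of~\eqref{e-2} with $\tan\eps=\sqrt{4t^2-1}$ and $\ell=\ell'+\tfrac12\sqrt{4t^2-1}$, one gets $\ell' > \tfrac{3}{4\sqrt{4t^2-1}} - \tfrac12\sqrt{4t^2-1} = \tfrac{3-2(4t^2-1)}{4\sqrt{4t^2-1}}$, which is \emph{not} obviously the stated bound $\tfrac12\sqrt{16t^2-1}$ — so the arithmetic of the improvement factor and of the radius-versus-$\ell$ conversion is exactly the step that must be handled with care, and I expect the main obstacle to be getting the geometric constants to line up so that the final clean form $\ell'>\tfrac12\sqrt{16t^2-1}$ emerges (which suggests the optimal $\eps$ actually satisfies $\tan\eps$ involving $\sqrt{16t^2-1}$ rather than $\sqrt{4t^2-1}$, i.e.\ the relevant half-angle is measured differently than in my first attempt, perhaps because on the cap side the lift uses the mushroom's axis of symmetry and the effective width doubles). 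Once that constant is pinned down, the entropy bound $\tfrac12\log 2$ follows immediately from Proposition~\ref{p-ent1}.
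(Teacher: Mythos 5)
Your overall strategy is the right one --- apply the refined version of~\eqref{e-2} with the factor $N+1=2$ improved to $\tfrac32$ as in Proposition~\ref{st}, determine the largest admissible $\eps$ from the cap geometry, and then convert between $\ell$ and $\ell'$ --- but the central computation, the value of $\eps$, is wrong, and as you yourself note, your arithmetic does not produce the stated bound. Your relation $\cos\eps=\tfrac1{2t}$ comes from asking only that the subarc of the semicircle lie between the lines $y=\pm\tfrac12$ (so that it is disjoint from $\Gamma_1\cup\Gamma_2$), and even for that condition the trigonometry is off: the normal to the arc at a point at height $y$ is radial, so its argument with the \emph{horizontal} is $\arcsin(y/t)$, which would give $\sin\eps=\tfrac1{2t}$, not $\cos\eps=\tfrac1{2t}$. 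More importantly, that is not the binding constraint. The constraint that actually determines $\eps$ is the $\eps$-freeness of the subarc: a trajectory leaving the point of the arc at angular position $\eps$ (height $t\sin\eps$, horizontal distance $t\cos\eps$ from the mouth of the stalk) with argument $+\eps$ must clear the shoulder of the mushroom cap, i.e.\ must enter the stalk below height $\tfrac12$. Over the horizontal run $t\cos\eps$ it rises by $t\cos\eps\tan\eps=t\sin\eps$, so it reaches the mouth at height $2t\sin\eps$, and the condition is $2t\sin\eps\le\tfrac12$, i.e.\ $t\sin\eps=\tfrac14$ at the optimum. This is the relation the paper reads off from Figure~\ref{fig9}, and it gives $\tan\eps=1/\sqrt{16t^2-1}$ and $t\cos\eps=\tfrac14\sqrt{16t^2-1}$. (As a bonus, $2t\sin\eps=\tfrac12$ is exactly the vertical extent of the used subarc, which is what makes the $\tfrac32$ improvement of the factor $N+1$ carry over verbatim from the stadium case.)

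With the correct $\eps$ the bookkeeping closes immediately: the refined~\eqref{e-2} with $N=1$ reads $2\ell\tan\eps>\tfrac32$, i.e.\ $\ell>\tfrac34\sqrt{16t^2-1}$, and the horizontal reach of the used subarc is $t\cos\eps=\tfrac14\sqrt{16t^2-1}$, so $\ell=\ell'+\tfrac14\sqrt{16t^2-1}$ and hence $\ell'>\tfrac12\sqrt{16t^2-1}$. Your final speculation that ``the effective width doubles'' because of the lift is not the explanation --- the unfolding across the vertical cap doubles the horizontal length (which is already accounted for by the factor $2$ in~\eqref{e-2}) but does not change the vertical geometry of the cap; the factor of $4$ in $\sqrt{16t^2-1}$ comes from the shoulder-clearance condition $2t\sin\eps\le\tfrac12$ described above. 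So the proof as proposed has a genuine gap at the one step where the mushroom geometry actually enters.
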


\begin{proof}
Look at Figure~\ref{fig9}, where the largest possible $\eps$ is used.
We have $t\sin\eps=1/4$.

\begin{figure}[ht]
\begin{center}
\includegraphics[width=60truemm]{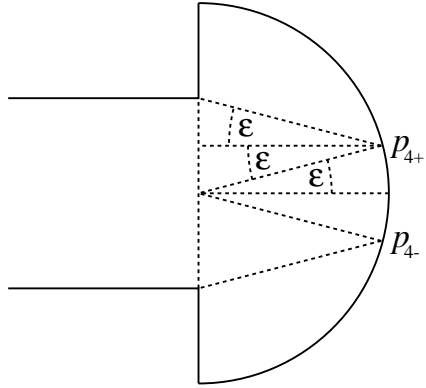}
\caption{Computations for a mushroom.}\label{fig9}
\end{center}
\end{figure}

Therefore, $\tan\eps=1/\sqrt{16t^2-1}$. Similarly as for the stadium,
when we use~\eqref{e-2} with $N=1$, we may replace $N+1$ by $\frac32$.
Taking into account that we need a strict inequality, we get
$\ell>\frac34\sqrt{16t^2-1}$. However,
$\ell=\ell'+t\cos\eps=\ell'+\frac14\sqrt{16t^2-1}$, so our condition
is $\ell'>\frac12\sqrt{16t^2-1}$.
\end{proof}

\begin{figure}[ht]
\begin{center}
\includegraphics[width=140truemm]{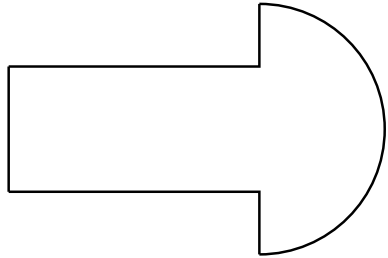}
\caption{A mushroom with topological entropy at least
  $\frac12\log2$.}\label{fig8}
\end{center}
\end{figure}

Observe that the assumption of Proposition~\ref{mu} is satisfied if
the length of the stalk is equal to or larger than the diameter of the
cap (see Figure~\ref{fig8}).
}


\begin{thebibliography}{12}

\bibitem{ALM}
Ll.~Alsed\`{a}, J.~Llibre and M.~Misiurewicz, ``Combinatorial Dynamics
and Entropy in Dimension One'' (Second Edition), Advanced Series in
Nonlinear Dynamics vol. \textbf{5}, World Scientific, Singapore, 2000.

\bibitem{BGMY}
L.~Block, J.~Guckenheimer, M.~Misiurewicz and L.-S. Young,
\textit{Periodic points and topological entropy of one dimensional
maps} , in ``Global Theory of Dynamical Systems'', (Lecture Notes in Math.
\textbf{819}), Springer, Berlin 1980, pp. 18-34.

\bibitem{B1}
L.~A.~Bunimovich,
\textit{On the ergodic properties of nowhere dispersing billiards},
Comm. Math. Phys. {\bf 65} (1979), 295-312.

\bibitem{B2}
L.~A.~Bunimovich,
\textit{Mushrooms and other billiards with divided phase space},
Chaos {\bf 11} (2001), 802-808.

\bibitem{K}
B.~P.~Kitchens,
``Symbolic dynamics. One-sided, two-sided and countable state Markov
shifts'',
Springer-Verlag, Berlin, 1998.

\bibitem{MZh}
M.~Misiurewicz and Hong-Kun~Zhang,
\textit{Topological entropy of Bunimovich stadium billiards},
Pure and Applied Functional Analysis {\bf 6} (2021), 221-229.

\end{thebibliography}
\end{document}